\newtheorem{thm}{Theorem}[section]
\newtheorem{lem}[thm]{Lemma}
\newtheorem{cor}[thm]{Corollary}
\newtheorem{prop}[thm]{Proposition}
\newtheorem{conj}[thm]{Conjecture}
\theoremstyle{definition}
\newtheorem{rmk}[thm]{Remark}
\newcommand{\cd}{{\rm cd}}
\newcommand{\im}{{\rm im}}
\newcommand{\Gal}{{\rm Gal}}
\newcommand{\sE}{{\mathcal E}}
\newcommand{\G}{{\mathbb G}}
\begin{document}
\title[]{On $p$-embedding problems in characteristic $p$} 
 \author{Lior Bary-Soroker}
\address{
Institut f\"ur Experimentelle Mathematik,
Universit\"at Duisburg-Essen,
Ellernstrasse 29,
D-45326 Essen,
Germany}
\email{lior.bary-soroker@uni-due.de}%

 \author{ Nguy\^e\~n Duy T\^an }
 \address{ Universit\"at Duisburg-Essen, FB6, Mathematik, 45117 Essen, Germany \\
 and Institute of Mathematics, 18 Hoang Quoc Viet, 10307, Hanoi - Vietnam.}
\email{duy-tan.nguyen@uni-due.de}
\thanks{The second author is partially supported by NAFOSTED, SFB/TR45 and the ERC/Advanced Grant 226257.}

\date{August 11, 2010} 

\begin{abstract} Let $K$ be a valued field of characteristic $p>0$ with non-$p$-divisible value group. We show that every finite embedding problem for $K$ whose kernel is a $p$-group is  properly solvable. 

AMS Mathematics Subject Classification (2010): 12E30, 12F12
\end{abstract}
\maketitle

\section{Introduction}
In the proof that every finite solvable group occurs as a Galois group over the rationals, Shafarevich studies the solvability of embedding problems with nilpotent kernel and solvable cokernel. To study the absolute Galois group $\Gal(K)$ of a field $K$ via embedding problems became a trend in recent papers, e.g.\ \cite{BHH,Ha3,HS, Pa1,Po}. See also the upcoming book \cite{Ja} and references therein. 
 
In this work we consider a field $K$ of characteristic $p>0$ and the finite embedding problems for $K$ whose kernels are $p$-groups which we call \textbf{finite $p$-embedding problems}. An obvious necessary condition to have a \emph{proper} solution is to have a \emph{weak} solution (see Section~\ref{sec:3} for definitions). This latter condition is automatically satisfied in our case, since $\cd_p(\Gal(K))\leq 1$, for a field of characteristic $p>0$. We obtain a mild sufficient condition on $K$ to have a proper solution of any finite $p$-embedding problem.

\begin{thm}
\label{thm:main}
Let $K$ be a field of characteristic $p$ admitting a non-$p$-divisible valuation. Then every finite $p$-embedding problem for $K$ is solvable.
\end{thm}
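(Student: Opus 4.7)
The plan is to induct on the order of the kernel to reduce to the case of a simple $\F_p[H]$-module kernel, produce a weak solution using the hypothesis $\cd_p\Gal(K)\le 1$, and twist it into a proper solution via a $1$-cocycle constructed from the Artin--Schreier characters supplied by the valuation.

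For the reduction, write the embedding problem as $(\phi\colon\Gal(K)\twoheadrightarrow H,\ \alpha\colon G\twoheadrightarrow H)$ with $M:=\ker\alpha$ a nontrivial finite $p$-group, and let $N$ be a minimal $G$-invariant subgroup of $Z(M)$. Minimality forces $N$ to be elementary abelian, and $M\subseteq C_G(N)$ makes the $G$-action on $N$ factor through $H$, so $N$ is a simple $\F_p[H]$-module. Factoring $\alpha$ through $G/N$, the outer embedding problem has strictly smaller kernel and is proper-solvable by induction on $|M|$, reducing us to the case where $M=N$ is itself a simple $\F_p[H]$-module.

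In that reduced case, since $N$ is $p$-primary and $\cd_p\Gal(K)\le 1$, we have $H^2(\Gal(K),N)=0$ and hence a weak solution $\psi_0\colon\Gal(K)\to G$ exists. The intersection $\im\psi_0\cap N$ is an $H$-submodule of $N$, so by simplicity it equals $N$ (making $\psi_0$ proper) or $0$; assume the latter, so $\psi_0$ is a splitting of $\alpha$ and $G\cong N\rtimes H$. Every weak solution then has the form $\psi_c(g):=c(g)\psi_0(g)$ for a unique $c\in Z^1(\Gal(K),N)$. Setting $L:=\bar K^{\ker\phi}$, so that $\Gal(L/K)=H$ and $\psi_0|_{\Gal(L)}=1$, one checks directly that $\psi_c$ is surjective exactly when $c|_{\Gal(L)}\colon\Gal(L)\to N$ is surjective. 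This restriction automatically lies in $\Hom_H(\Gal(L),N)$, and simplicity of $N$ makes every nonzero element there surjective, so it suffices to exhibit a cocycle $c$ with $c|_{\Gal(L)}\ne 0$.

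To build such a $c$, present $N$ as a quotient of the free module $\F_p[H]^r$; vanishing of $H^2(\Gal(K),-)$ on $p$-primary modules gives a surjection $H^1(\Gal(K),\F_p[H]^r)\twoheadrightarrow H^1(\Gal(K),N)$. By Shapiro's lemma and Artin--Schreier, $H^1(\Gal(K),\F_p[H])\cong H^1(\Gal(L),\F_p)\cong L/\wp(L)$. The valuation $v$ on $K$ extends to $L$, and one must argue that $v(L^\times)$ remains non-$p$-divisible: since $[v(L^\times):v(K^\times)]$ divides $[L:K]$, this is a quick chase when $[L:K]$ is prime to $p$, and can be handled stepwise along an Artin--Schreier tower in the wild case, using that each degree-$p$ Artin--Schreier extension of a valued field of characteristic $p$ with non-$p$-divisible value group retains this property. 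Granting this, the classical computation $v(\wp(x))=\min(pv(x),v(x))$ yields infinitely many $\F_p$-independent classes $\{\pi^{-n}\bmod\wp(L)\}_{(n,p)=1}$ for any $\pi\in L$ with $v(\pi)<0$ and $v(\pi)\notin pv(L^\times)$. Hence $H^1(\Gal(K),N)$ is infinite, and the inflation--restriction sequence
\[
0\to H^1(H,N)\to H^1(\Gal(K),N)\to \Hom_H(\Gal(L),N)\to H^2(H,N)\to 0
\]
has finite cokernel; the image of the restriction map is therefore infinite, in particular nonzero, producing the required cocycle.

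The hardest step will be the non-$p$-divisibility of $v$ on $L$ in the wildly ramified case, where the ramification index can be divisible by $p$; the rest of the argument is a standard unwinding of inflation--restriction combined with the cohomological dimension hypothesis.
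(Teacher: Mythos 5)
Your overall architecture (dévissage to a simple $\F_p[H]$-module kernel, existence of a weak solution from $\cd_p\le 1$, twisting by a cocycle whose restriction to $\Gal(L)$ is nonzero) is sound; it essentially re-proves Harbater's criterion, which the paper instead quotes as a black box (Theorem~\ref{thm:Harbater}). But there is a genuine gap at the decisive step. From the presentation $0\to R\to \F_p[H]^r\to N\to 0$ and $H^2(\Gal(K),R)=0$ you get a surjection $H^1(\Gal(K),\F_p[H]^r)\twoheadrightarrow H^1(\Gal(K),N)$, and you conclude ``hence $H^1(\Gal(K),N)$ is infinite.'' That does not follow: a surjection from an infinite group can have finite target, since the kernel --- here the image of $H^1(\Gal(K),R)$, which is typically itself infinite --- is not controlled. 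Concretely, $\Hom_H(\Gal(L),N)$ can vanish even when $\Hom(\Gal(L),\F_p)$ is infinite, if the simple module $N$ does not occur in the head of $\Gal(L)^{\mathrm{ab}}\otimes\F_p$ as an $H$-module; this is exactly the phenomenon behind the paper's warning that infinitude of $H^1(\Pi,\Z/p\Z)$ (equivalently, of $L/\wp(L)$ via Shapiro) does \emph{not} in general imply infinitude of $H^1(\Pi,P)$ for all elementary $p$-modules $P$, citing Harbater's counterexample in \cite{Ha2}. So the step you treat as formal is precisely the content that requires field-theoretic input.

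The paper supplies that input via Lemma~\ref{lem:additivepol}: the module $N$, viewed as a finite \'etale $K$-group scheme killed by $p$, is the kernel of an additive polynomial $f\colon\G_a\to\G_a$, whence $H^1(K,N)\cong K/f(K)$ by the additive Hilbert 90; Proposition~\ref{prop:infinite} then shows $K/f(K)$ is infinite directly from the non-$p$-divisible valuation on $K$. Your proposal has no analogue of this identification, and without it the argument cannot be completed. Two smaller remarks: the step you flag as hardest (non-$p$-divisibility of $v(L^\times)$) is actually easy and needs no Artin--Schreier tower --- $v(L^\times)$ contains $v(K^\times)$ with finite index, and a group containing a non-$p$-divisible subgroup of finite index cannot be $p$-divisible --- and in the paper's approach it is not needed at all, since everything is computed over $K$. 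Also, in your Artin--Schreier computation you want $v(\pi)>0$ (so that $v(\pi^{-n})<0$), not $v(\pi)<0$.
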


Some examples of fields satisfying this condition are the following. If $R$ is a Noetherian domain or Krull domain of characteristic $p>0$, then its  fraction field $K$ satisfies the hypothesis of Theorem~\ref{thm:main}. If $R$ is an arbitrary domain of characteristic $p>0$, then the fraction fields of the ring $R[x_1, \ldots, x_n]$ of polynomials and  of the ring of formal Taylor series $R[[x_1, \ldots, x_n]]$ satisfy the hypothesis of Theorem~\ref{thm:main}, for any $n\geq 1$.

The proof of Theorem~\ref{thm:main} is based on the following cohomological criterion of Harbater. A profinite group $\Pi$ is called {\bf strongly $p$-dominating} if $H^1(\Pi, P)$ is infinite for every nontrivial finite elementary $p$-group $P$ on which $\Pi$ acts\footnote{All actions, homomorphism, etc., in this work are assumed to be continuous.}.

\begin{thm}[{\cite[Theorem 1b]{Ha2}}]
\label{thm:Harbater}
 Let $\Pi$ be a profinite group. Assume that $\Pi$ is strongly $p$-dominating and that $\cd_p(\Pi)\leq 1$. Then every finite $p$-embedding problem for $\Pi$ is properly solvable.
\end{thm}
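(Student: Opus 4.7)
The plan is to prove the theorem in three stages: reduce to the case where the kernel is a simple $\F_p[G]$-module, produce a weak solution via the $\cd_p\le 1$ hypothesis, and modify it by a suitable $1$-cocycle---drawn from the infinite supply furnished by strong $p$-domination---to obtain a proper solution.

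\textbf{Reduction.} Consider a finite $p$-embedding problem $(\alpha\colon\Pi\twoheadrightarrow G,\ \beta\colon H\twoheadrightarrow G)$ with $N:=\ker(\beta)$ a nontrivial finite $p$-group. Choose a minimal normal subgroup $N_1$ of $H$ contained in $N$; since $N$ is a $p$-group, $N_1\cap Z(N)$ is nontrivial and $H$-stable, so by minimality $N_1\subseteq Z(N)$. In particular $N_1$ is elementary abelian, and because $N$ acts trivially on it, $N_1$ is a simple $\F_p[G]$-module. The $p$-embedding problem $(\alpha,\ H/N_1\twoheadrightarrow G)$ has kernel $N/N_1$ of strictly smaller order, so by induction on $|N|$ it admits a proper solution $\bar\gamma\colon\Pi\twoheadrightarrow H/N_1$. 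Then $(\bar\gamma,\ H\twoheadrightarrow H/N_1)$ is a $p$-embedding problem with simple kernel $N_1$, and it suffices to prove the theorem in that case.

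\textbf{Weak solution and case split.} Assume now that $N$ is a nontrivial simple $\F_p[G]$-module. Since $\cd_p(\Pi)\le 1$ and $N$ is $p$-torsion, the obstruction group $H^2(\Pi, N)$ vanishes, so a weak solution $\gamma\colon\Pi\to H$ with $\beta\gamma=\alpha$ exists. Let $H'=\gamma(\Pi)$. Because $\beta\gamma=\alpha$ is onto, $H'N=H$, and $H'\cap N$ is normalized by $H'$ and---since $N$ is abelian---by $N$, hence normal in $H$. Simplicity of $N$ forces $H'\cap N\in\{1,N\}$. If $H'\cap N=N$ then $\gamma$ is already surjective and we are done. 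Otherwise $H=H'\ltimes N$ and $\beta|_{H'}\colon H'\xrightarrow{\sim} G$; fixing the corresponding section $s\colon G\to H$, we have $\gamma=s\circ\alpha$.

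\textbf{Cocycle counting.} Every weak solution $\gamma'$ has the form $\gamma'(x)=c(x)\gamma(x)$ for a unique $c\in Z^1(\Pi,N)$ (with $\Pi$ acting on $N$ via $\alpha$), and each $c\in Z^1(\Pi,N)$ produces a weak solution. The image $\gamma'(\Pi)$ again meets $N$ in $1$ or $N$ by the previous argument, and $\gamma'(\Pi)\cap N=1$ precisely when $\gamma'=s'\circ\alpha$ for some section $s'\colon G\to H$; writing $s'(g)=d(g)s(g)$ with $d\in Z^1(G,N)$ translates this to $c=\alpha^*d\in\alpha^*Z^1(G,N)$. Thus the non-surjective weak solutions correspond bijectively to a finite subset of $Z^1(\Pi,N)$. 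Strong $p$-domination applied to the nontrivial $\F_p[\Pi]$-module $N$ gives $H^1(\Pi,N)$ infinite, and the exact sequence
\[
0\to N^\Pi\to N\to Z^1(\Pi,N)\to H^1(\Pi,N)\to 0
\]
then forces $Z^1(\Pi,N)$ to be infinite. Any $c\in Z^1(\Pi,N)\setminus\alpha^*Z^1(G,N)$ therefore yields a surjective weak solution, completing the proof. I expect the main subtlety to lie in pinning down the weak-solution-to-cocycle dictionary and checking that non-surjectivity is captured exactly by the subset $\alpha^*Z^1(G,N)$; the decisive ingredient, once that bookkeeping is in place, is merely the finite-versus-infinite cardinality comparison.
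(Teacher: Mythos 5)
Your proof is correct and complete. The paper itself offers no proof of Theorem~\ref{thm:Harbater} --- it is quoted from \cite{Ha2} --- so there is nothing internal to compare against; your argument is essentially the standard one (and Harbater's own): reduce to a kernel that is a simple $\F_p[G]$-module via a minimal normal subgroup $N_1\subseteq Z(N)$, obtain a weak solution from $H^2(\Pi,N)=0$ (which follows from $\cd_p(\Pi)\le 1$ since $N$ is a finite $p$-group), and then observe that the non-surjective weak solutions are exactly those factoring through a homomorphic section $G\to H$, hence lie in the finite subset $\alpha^*Z^1(G,N)$ of the infinite set $Z^1(\Pi,N)$. The two pivotal verifications --- that $\gamma'(\Pi)\cap N$ is normal in $H$ (using that $N$ is abelian and $\gamma'(\Pi)N=H$), so that simplicity forces it to be trivial or all of $N$, and that $\gamma'(\Pi)\cap N=1$ if and only if $\gamma'=s'\circ\alpha$ for a section $s'$ --- are both carried out correctly, and the finite-versus-infinite comparison then closes the argument. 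Note that your weak-solution/cocycle dictionary is the same torsor structure (Lemma~\ref{lem:WS}, i.e.\ \cite[Proposition 9.4.4]{NSW}) that the paper exploits in the reverse direction to prove the converse statement, Theorem~\ref{thm:converse}.
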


Harbater's motivation for Theorem~\ref{thm:Harbater} is to show that every finite $p$-embedding problem for the \'etale fundamental group $\Pi:=\pi_1(X)$ of an affine variety $X$ over an arbitrary field $K$ of characteristic $p>0$ has a  proper solution \cite{Ha1}. 

We show that the converse of Theorem~\ref{thm:Harbater} also holds true, see Theorem~\ref{thm:converse}. Moreover,  to get the assertion of Theorem~\ref{thm:Harbater}, one may suspect that the infinitude of  $H^1(\Pi, \mathbb{Z}/p\mathbb{Z})$ suffices, where $\Pi$ acts trivially on $\mathbb{Z}/p\mathbb{Z}$. This is true if both the kernel and cokernel are $p$-groups, but in general it fails, see \cite{Ha2}. 

By Theorem~\ref{thm:Harbater}, to prove Theorem~\ref{thm:main} it suffices to show that $\Gal(K)$ is strongly $p$-dominating. This is carried out by using that 
for every nontrivial finite elementary $p$-group $P$ on which $\Pi$ acts we have $H^1(K,P) = K/f(K)$, for some additive polynomial $f$ (Lemma~\ref{lem:additivepol}). Then using the non-$p$-divisible valuation of $K$ we construct infinitely many $a\in K$ that are distinct modulo $f$.  

We conclude the introduction with an example. Let $K_0$ be a field of characteristic $p>0$ and $K=K_0((x))$ the field of formal Laurent series. Then by Theorem~\ref{thm:main} every finite $p$-embedding problem is properly solvable. When $K_0$ is algebraically closed, Harbater proves this in \cite[Example 5]{Ha2} using a similar method. However, when $K_0$ is arbitrary Harbater invokes a theorem of Katz-Gabber in order to complete his proof (see \cite[Proposition 6]{Ha2}).

\noindent {\bf Acknowledegements:} We would like to give our sincere thanks to H\'el\`ene Esnault for her support and constant encouragement. 

The first author is an Alexander von Humboldt fellow in The Institut f\"ur Experimentelle Mathematik,
Universit\"at Duisburg-Essen.

\section{Valuation-theoretic lemmas}

Let $A$ be a ring. By a {\bf valuation} of $A$ we shall mean a map $v: x\mapsto v(x)$ of $A$ onto a totally ordered commutative group $\Gamma$ (written addtively), together with an extra element $\infty$, such that:
\begin{enumerate}
\item $\alpha+\infty=\infty$ and $\alpha<\infty$ for all $\alpha\in \Gamma$.
\item $v(x)=\infty$ if and only if $x=0$.
\item $v(xy)=v(x)+v(y)$ for all $x,y\in A$.
\item $v(x+y)\geq \min\{v(x),v(y)\}$.
\end{enumerate}

If $A$ is a ring with a valuation $v$ on $A$, we shall also say simply that $A$ is a {\bf valued ring}.  The group $\Gamma$ is called the {\bf value group}.

\begin{lem} 
\label{lem:val1}
Let $\Gamma$ be a nontrivial totally ordered commutative group 
\begin{enumerate}
\item For any element $\gamma$ in $\Gamma$, there exists $\beta\in \Gamma$ such that $\beta< \gamma$.
\item Let $\gamma_1,\ldots,\gamma_r$ be elements in $\Gamma$ and let $n_1,\ldots,n_r$ be positive numbers. Then there exists an element $\gamma_0$ in $\Gamma$  such that for all elements $\gamma<\gamma_0$, $\gamma\in \Gamma$, we have $n_i\gamma<\gamma_i$ for all $i$.
\end{enumerate}
\end{lem}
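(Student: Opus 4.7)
The plan is to use only the defining properties of a totally ordered abelian group, together with the elementary fact that in such a group multiplication by a positive integer is strictly order preserving (because a finite sum of strictly positive elements is strictly positive). No archimedean or divisibility hypothesis on $\Gamma$ is available, so the argument must be carried out purely by additive manipulations.

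For (1), I would exploit nontriviality to get some $\delta' \in \Gamma$ with $\delta' \neq 0$; replacing $\delta'$ by $-\delta'$ if necessary, I may assume $\delta' > 0$. Then setting $\beta := \gamma - \delta'$ gives $\gamma - \beta = \delta' > 0$, whence $\beta < \gamma$.

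For (2), the key observation is that it suffices to produce a single $\gamma_0 \in \Gamma$ satisfying $n_i \gamma_0 \leq \gamma_i$ for every $i$: given such a $\gamma_0$, any $\gamma \in \Gamma$ with $\gamma < \gamma_0$ will automatically satisfy $n_i \gamma < n_i \gamma_0 \leq \gamma_i$ by strict order preservation under multiplication by the positive integer $n_i$. To construct such a $\gamma_0$, I would set $\eta_i := \min(\gamma_i, 0)$ for each $i$. A quick case check shows $n_i \eta_i \leq \gamma_i$: if $\gamma_i \geq 0$ then $\eta_i = 0$ and $n_i \eta_i = 0 \leq \gamma_i$; if $\gamma_i < 0$ then $\eta_i = \gamma_i$ and $n_i \eta_i - \gamma_i = (n_i - 1)\eta_i \leq 0$ since $n_i - 1 \geq 0$ and $\eta_i < 0$. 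Taking $\gamma_0 := \min_i \eta_i$ then does the job.

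There is no genuine obstacle here; the only thing one must avoid is the tempting but illegitimate move of defining $\gamma_0$ by ``dividing'' $\gamma_i$ by $n_i$, since such a quotient need not exist in $\Gamma$. The min-with-zero trick sidesteps this and works in any totally ordered abelian group.
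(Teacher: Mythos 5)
Your proof is correct and follows essentially the same route as the paper: your $\gamma_0 := \min_i \min(\gamma_i,0)$ is exactly the paper's $\min\{\gamma_1,\dots,\gamma_r,0\}$, and the justification via order-preservation of multiplication by positive integers matches the paper's argument that $\gamma<0$ and $\gamma<\gamma_i$ force $n_i\gamma<\gamma_i$. Part (1) differs only cosmetically (you give a uniform subtraction argument where the paper splits into the cases $\gamma\geq 0$ and $\gamma<0$).
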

\begin{proof}
1) If $\gamma \geq 0$, then let $\beta <0 \leq \gamma$ (such an element exists since $\Gamma$ is nontrivial). 

If $\gamma<0$, one can takes $\beta=2\gamma<\gamma$.  \\
2) We set 
$$\gamma_0:=\min\{\gamma_1,\ldots,\gamma_r, 0\}.$$ 
Now let $\gamma$ be an arbitrary element such that $\gamma<\gamma_0$.  Since $\gamma<\gamma_i$, $\gamma<0$, it follows that 
$n_i\gamma<\gamma_i$, for all $i$.
\end{proof}

\begin{lem}
\label{lem:val2}
Let $A$ be a valued ring of characteristic $p>0$ with nontrivial value group $\Gamma$. Let $f(T)=b_0T+\cdots+b_mT^{p^m}$ be a $p$-polynomial in one variable with coefficients in $A$. Then there exists an element $\gamma_0\in \Gamma$ such that if $a=f(a_1), a_1\in A$ and $v(a)<\gamma_0$ then $v(a)=v(b_m)+p^mv(a_1)$.
\end{lem}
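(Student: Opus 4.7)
The plan is to exploit the ultrametric inequality: when $v(a_1)$ is sufficiently negative, the top-degree term $b_m a_1^{p^m}$ strictly dominates every lower-order term in valuation, so that $v(f(a_1)) = v(b_m) + p^m v(a_1)$. The role of the threshold $\gamma_0$ is then to translate the hypothesis $v(a) < \gamma_0$ into the requirement that $v(a_1)$ lies in that dominance regime.

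First I would dispose of the trivial case in which $b_i = 0$ for every $i < m$ (where $f(a_1) = b_m a_1^{p^m}$ and any $\gamma_0$ works) and assume the leading coefficient $b_m \neq 0$. Observing that $v(b_m a_1^{p^m}) < v(b_i a_1^{p^i})$ is equivalent to $(p^m - p^i)\,v(a_1) < v(b_i) - v(b_m)$, I would apply Lemma~\ref{lem:val1}(2) with the positive integers $n_i := p^m - p^i$ and the group elements $v(b_i) - v(b_m)$, ranging over the finite set of $i < m$ with $b_i \neq 0$, to produce an element $\gamma_0' \in \Gamma$ such that $v(a_1) < \gamma_0'$ implies all these strict inequalities simultaneously. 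By the ultrametric inequality, the unique strictly smallest term then determines the valuation, so $v(f(a_1)) = v(b_m) + p^m v(a_1)$.

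Next I would set $\gamma_0 := \min_{0 \leq i \leq m,\; b_i \neq 0}\bigl(v(b_i) + p^i \gamma_0'\bigr)$ and argue by contraposition that $v(a) < \gamma_0$ forces $v(a_1) < \gamma_0'$. Indeed, since multiplication by a positive integer is order-preserving on any totally ordered abelian group, the assumption $v(a_1) \geq \gamma_0'$ would yield $v(b_i a_1^{p^i}) \geq v(b_i) + p^i \gamma_0' \geq \gamma_0$ for every $i$ with $b_i \neq 0$, whence $v(a) \geq \gamma_0$ by the ultrametric inequality, contradicting the hypothesis. Combining the two steps gives the result.

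I do not anticipate a substantial obstacle: the statement is essentially a bookkeeping consequence of Lemma~\ref{lem:val1}(2) together with the ultrametric inequality. The only subtlety worth checking is that multiplication by the positive integers $p^i$ and $p^m - p^i$ is order-preserving on $\Gamma$, which is immediate from the definition of a totally ordered abelian group.
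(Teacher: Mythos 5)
Your proposal is correct and follows essentially the same route as the paper: both use Lemma~\ref{lem:val1}(2) with the integers $p^m-p^i$ to find a threshold $\gamma_0'$ (the paper's $\alpha$) below which the top term strictly dominates, then take $\gamma_0$ to be (at most) $\min_i\bigl(v(b_i)+p^i\gamma_0'\bigr)$ and deduce $v(a_1)<\gamma_0'$ from $v(a)<\gamma_0$ via the ultrametric inequality. Your explicit restriction to indices with $b_i\neq 0$ and the contrapositive phrasing are only cosmetic differences from the paper's argument.
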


\begin{proof}
By Lemma~\ref{lem:val1}, there exists an element $\alpha \in \Gamma$ such that  for all $\gamma<\alpha$ in $\Gamma$, we have
$$(p^m-p^i)\gamma< v(b_i)-v(b_m), \;\,\forall \;0\leq i < m.$$
We set 
$$\beta:=\min \{v(b_i)+\alpha p^i \,|\, 0\leq i\leq m\}.$$
Let $\gamma_0$ be any element with $\gamma_0<\beta$. Now assume that $a=f(a_1)$ such that $v(a)<\gamma_0$ ($a_1\in A$). Let $s$ be an index such that 
$$v(b_sa_1^{p^s}) =\min \{v(b_ia_1^{p^i}) \,|\, 0\leq i\leq m\}).$$
Then 
$$v(b_s)+p^s \alpha>\gamma_0>v(f(a_1))\geq v(b_s)+p^sv(a_1).$$
Thus $0<p^s(\alpha-v(a_1))$ and hence $v(a_1)<\alpha$. By the choice of $\alpha$, we have
$$v(b_ia_1^{p^i})=v(b_i)+p^iv(a_1)>v(b_m)+p^mv(a_1)=v(b_ma_1^{p^m}), \forall i<m.$$
Therefore $v(a)= v(b_m)+p^mv(a_1)$ as required.
\end{proof}

\begin{lem}
\label{lem:val3}
Let $\Gamma$ be a non-$p$-divisible totally ordered commutative group. Let $\alpha_0,\gamma_0$ be elements in $\Gamma$. Then there exist infinitely many elements $\gamma_i\in \Gamma$ such that 
$$\gamma_0>\gamma_1>\cdots >\gamma_i>\cdots$$  
and $\gamma_i\not\in \alpha_0+ p\Gamma,$ for all $i>0$.
\end{lem}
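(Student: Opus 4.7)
The plan is to work inside a single nontrivial coset of $p\Gamma$ and produce a strictly decreasing sequence there, bounded above by $\gamma_0$. Since $\Gamma$ is not $p$-divisible, I would pick $\delta\in\Gamma\setminus p\Gamma$ and set $c:=\alpha_0+\delta$. For any $\mu\in\Gamma$, the element $c+p\mu$ satisfies $(c+p\mu)-\alpha_0=\delta+p\mu\notin p\Gamma$, so $c+p\mu\notin\alpha_0+p\Gamma$. This is the key observation: every $\gamma_i$ can be built out of the single witness $\delta$ by adding multiples of $p$.

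Next I would apply Lemma~\ref{lem:val1}(2) with $r=1$, $n_1=p$, and the single prescribed bound $\gamma_0-c$, to obtain $\gamma_*\in\Gamma$ such that $\mu<\gamma_*$ forces $p\mu<\gamma_0-c$, equivalently $c+p\mu<\gamma_0$. Then I would iterate Lemma~\ref{lem:val1}(1) to extract a strictly decreasing sequence $\mu_1>\mu_2>\cdots$ of elements of $\Gamma$ all lying strictly below $\gamma_*$; note that $\Gamma$ is nontrivial (forced by the non-$p$-divisibility hypothesis), so Lemma~\ref{lem:val1}(1) applies.

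Finally I would set $\gamma_i:=c+p\mu_i$. Multiplication by the positive integer $p$ is strictly order-preserving on the totally ordered (hence torsion-free) abelian group $\Gamma$, so $\gamma_1>\gamma_2>\cdots$; the bound $\gamma_i<\gamma_0$ is built into the choice of $\mu_i<\gamma_*$; and $\gamma_i\notin\alpha_0+p\Gamma$ follows from the coset computation above. There is no genuine obstacle: the only creative step is the realization that picking $c$ in a coset of $p\Gamma$ other than $\alpha_0+p\Gamma$ and then perturbing by $p\mu$ automatically stays in that coset, which reduces the entire problem to producing a strictly decreasing sequence in $\Gamma$, handled directly by the earlier valuation-theoretic lemma.
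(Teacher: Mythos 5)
Your proof is correct and is essentially the paper's own argument: both pick a witness $\delta\notin p\Gamma$, translate into the coset $\alpha_0+\delta+p\Gamma$, and perturb by $p$ times a strictly decreasing sequence obtained from Lemma~\ref{lem:val1}. The only cosmetic difference is that the paper first reduces to the case $\alpha_0=0$ and then translates, whereas you absorb the translation into the definition of $c$ from the start.
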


\begin{proof}
We first consider the case  $\alpha_0=0$. Since $\Gamma$ is not $p$-divisible, there is an element $a_0 \in \Gamma$ such that $a_0\not\in p\Gamma$. By Lemma~\ref{lem:val1} part (2), there exists an element $\delta_0$ such that for all $\delta<\delta_0$, we have $p\delta<\gamma_0=a_0$. By Lemma~\ref{lem:val1} part (1), there exists an infinite sequence
$$\delta_0>\delta_1>\cdots>\delta_i>\cdots.$$
Set $\gamma_i:=a_0+p\delta_i$, for all $i>0$. Then $\gamma_i\not\in p\Gamma,$ for all $i>0$ and $\gamma_0>\gamma_1>\cdots >\gamma_i>\cdots$. 

For the general case, applying the previous argument for $\gamma_0^\prime:=\gamma_0-\alpha_0$, we get an infinite sequence $\gamma_0^\prime>\gamma_1^\prime>\cdots >\gamma_i^\prime>\cdots$ with $\gamma_i^\prime \in \Gamma$ but $\gamma_i^\prime\not\in p\Gamma$. Setting $\gamma_i:=\gamma_i^\prime+\alpha_0$, we get a desired sequence of elements.

\end{proof}

\begin{prop}
\label{prop:infinite}
Let $A$ be a valued ring of characteristic $p>0$ with non-$p$-divisible value group $\Gamma$. Let $f(T)=b_0T+b_1T^p+\cdots+b_mT^{p^m}$ be a $p$-polynomial in one variable with coefficients in $A$ with $m\geq 1$ and $b_m\not=0$. Then $A/f(A)$ is infinite.
\end{prop}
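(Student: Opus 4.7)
The plan is to combine Lemmas~\ref{lem:val2} and \ref{lem:val3} to produce infinitely many elements of $A$ whose valuations lie outside the coset $v(b_m)+p\Gamma$, and then argue that such elements must be pairwise distinct modulo $f(A)$.

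First I would apply Lemma~\ref{lem:val2} to the $p$-polynomial $f$ to obtain a threshold $\gamma_0^\ast\in\Gamma$ such that whenever $a=f(c)$ with $c\in A$ and $v(a)<\gamma_0^\ast$, one has $v(a)=v(b_m)+p^m v(c)$. In particular, any element of $f(A)$ with valuation below $\gamma_0^\ast$ has valuation in the set $v(b_m)+p^m\Gamma\subseteq v(b_m)+p\Gamma$.

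Next I would apply Lemma~\ref{lem:val3} with $\alpha_0:=v(b_m)$ and with the threshold $\gamma_0^\ast$ to obtain an infinite strictly decreasing sequence
$$\gamma_0^\ast>\gamma_1>\gamma_2>\cdots$$
in $\Gamma$ with $\gamma_i\notin v(b_m)+p\Gamma$ for every $i\geq 1$. Since the valuation $v:A\to\Gamma\cup\{\infty\}$ is surjective, I pick elements $a_i\in A$ with $v(a_i)=\gamma_i$ for each $i\geq 1$.

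Finally I would verify that the classes of the $a_i$ in $A/f(A)$ are pairwise distinct. Suppose for contradiction that $a_i-a_j\in f(A)$ for some $i<j$, so $\gamma_i>\gamma_j$. Then, using the standard fact that $v(x-y)=\min\{v(x),v(y)\}$ when $v(x)\neq v(y)$, we get $v(a_i-a_j)=\gamma_j<\gamma_0^\ast$. Writing $a_i-a_j=f(c)$ with $c\in A$ (necessarily $c\neq 0$, since otherwise $a_i=a_j$ contradicts $\gamma_i\neq\gamma_j$), Lemma~\ref{lem:val2} forces
$$\gamma_j=v(f(c))=v(b_m)+p^m v(c)\in v(b_m)+p\Gamma,$$
contradicting the choice of $\gamma_j$. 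Hence the $a_i$ yield infinitely many distinct elements of $A/f(A)$.

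No step here looks genuinely hard: the only subtle point is to chain the two lemmas correctly so that the threshold produced by Lemma~\ref{lem:val2} serves as the starting value $\gamma_0$ in Lemma~\ref{lem:val3}, and to remember that $p^m\Gamma\subseteq p\Gamma$ so that avoidance of $v(b_m)+p\Gamma$ is strong enough to rule out any valuation arising from $f(A)$.
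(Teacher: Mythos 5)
Your proposal is correct and follows essentially the same route as the paper: use Lemma~\ref{lem:val2} to show that elements of $f(A)$ of sufficiently small valuation have valuation in $v(b_m)+p\Gamma$, use Lemma~\ref{lem:val3} (with $\alpha_0=v(b_m)$) and surjectivity of $v$ to produce a strictly decreasing sequence of valuations avoiding that coset, and conclude that the differences $a_i-a_j$ cannot lie in $f(A)$. Your computation $v(a_i-a_j)=\gamma_j$ (the smaller of the two valuations) is in fact slightly more carefully stated than the paper's own wording at that step.
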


\begin{proof} 
Let $\gamma_0$  be as in Lemma~\ref{lem:val2}. For any $a$ in $A$ such that $v(a) < \gamma_0$  and  $v(a)\not\in v(b_m)+p\Gamma$, Lemma~\ref{lem:val2} implies  that $a$ is not in $f(A)$. By Lemma~\ref{lem:val3} and noting that the valuation map $v$ is onto, we may choose a
sequence $\{a_i\}$ of elements from $A$ such that $v(a_i) \not \in v(b_m)+p\Gamma$ for all $i$, and  $\gamma_0 > v(a_1) > v(a_2)
> \cdots > v(a_i)>\cdots $. For every $i<j$, one has 
$$v(a_i-a_j)=v(a_i)\not\in v(b_m)+p\Gamma,$$ so $a_i-a_j\not\in f(A)$ and hence $a_i,a_j$ have different images in $A/f(A)$. Therefore, $A/f(A)$ is infinite. 
\end{proof}

\section{Proof of Theorem~\ref{thm:main} and a corollary}
\label{sec:3}
An {\bf embedding problem} $\sE$ for a profinite group $\Pi$ is a diagram 
\[
\xymatrix
{
{} & \Pi \ar[d]^{\alpha}\\
\Gamma \ar[r]^f & G
}
\] 
which consists of a pair of profinite groups $\Gamma$ and $G$ and epimomorphisms $\alpha:\Pi\to G$, $f:\Gamma\to G$. 

A {\bf weak solution} of $\sE$ is  a homomorphism $\beta:\Pi\to \Gamma$ such that $f\beta=\alpha$. If such a $\beta$ is surjective, then it is called a {\bf proper solution}. We will call $\sE$  {\bf weakly} (resp. {\bf properly}) {\bf solvable} if it has a weak (resp. proper) solution. 

We call $\sE$ a {\bf finite} embedding problem if the group $\Gamma$ is finite.

The {\bf kernel} of $\sE$ is defined to be $N:=\ker(f)$. We call $\sE$ a {\bf p-embedding problem} 
if $N$ is a  $p$-group. 

We say $\sE$ is a {\bf split} embedding problem if $f:\Gamma\to G$ has a group theoretical section, i.e., $f'\colon G\to \Gamma$ such that $f  f'$ is the identity map on $G$.

In this note, by a {\bf $K$-group}, where $K$ is a field, we mean  an algebraic affine group scheme which is smooth (\cite{Wa}). This notion is equivalent to the notion of a linear algebraic group defined over $K$ in the sense of \cite{Bo}.

First we need the following lemma.
\begin{lem}
\label{lem:additivepol}
Let $K$ be an infinite field of characteristic $p>0$. Let $P$ be a nontrivial finite commutative $K$-group which is  annihilated by $p$. Then $G$ is $K$-isomorphic to a $K$-subgroup of the additive group $\G_a$, of the form $\{x \mid f(x)=0\},$
where $f(T)=T+b_1T^p+\cdots+b_mT^{p^m}$ is a $p$-polynomial with coefficients in $K$, $m\geq 1$ and $b_m\not=0$. 
\end{lem}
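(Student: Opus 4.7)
Since $P$ is smooth and zero-dimensional, it is étale, and the goal is to realize $P$ as the kernel of a separable additive polynomial $f\colon\G_a\to\G_a$ over $K$. Assume first that a closed $K$-embedding $P\hookrightarrow\G_a$ has been produced; the rest is then essentially formal. The quotient $\G_a/P$ is a smooth connected one-dimensional commutative $K$-group with a $K$-rational point, hence a $K$-form of $\G_a$; since $\mathrm{Aut}(\G_a)=\G_m$ and $H^1(K,\G_m)=0$ by Hilbert~90, one has $\G_a/P\cong\G_a$. The induced surjection $f\colon \G_a\to\G_a$ is a $K$-group endomorphism of $\G_a$, i.e.\ an additive polynomial $f(T)=b_0T+b_1T^p+\cdots+b_mT^{p^m}$ with $b_m\neq 0$; étaleness of $P$ forces $b_0\neq 0$, rescaling gives $b_0=1$, and $P\neq 0$ gives $m\geq 1$.

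All the work is thus in producing the closed embedding $P\hookrightarrow\G_a$. Since $P$ is étale, this amounts to giving a $\Gal(K^{\mathrm{sep}}/K)$-equivariant $\F_p$-linear injection of $M:=P(K^{\mathrm{sep}})$ into $K^{\mathrm{sep}}$. Fix a finite Galois extension $L/K$ with group $G$ that splits $P$, so that $M$ is a finite $\F_p[G]$-module, and it suffices to embed $M$ into $L$ as $\F_p[G]$-module. The normal basis theorem identifies $L$ with $K[G]$ as $K[G]$-modules, and choosing an $\F_p$-basis $\{c_i\}_{i\in I}$ of $K$ then identifies $L$ with $\bigoplus_{i\in I}\F_p[G]$ as $\F_p[G]$-modules. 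Because $K$ is infinite, $I$ is infinite, so $L$ contains $\F_p[G]^{\oplus N}$ for every $N$; we are reduced to embedding $M$ into $\F_p[G]^N$ for some $N$.

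For this last step we exploit the fact that $\F_p[G]$ is a symmetric algebra, which gives an isomorphism $\F_p[G]^{\vee}\cong\F_p[G]$ of left $\F_p[G]$-modules. Since $M^{\vee}$ is finite-dimensional over $\F_p$, it is a finitely generated $\F_p[G]$-module, so there is a surjection $\F_p[G]^N\twoheadrightarrow M^{\vee}$ for some $N$. Dualizing and applying the self-duality produces the required injection $M\hookrightarrow\F_p[G]^N$, completing the embedding $P\hookrightarrow\G_a$.

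The main obstacle is the embedding of $P$ into $\G_a$. Two ingredients are essential: the modular self-duality of the group algebra $\F_p[G]$, and the infinitude of $K$ — precisely the hypothesis of the lemma, entering via the infinite index set $I$. An alternative route would first embed $P$ into some $\G_a^n$ by structure theory of commutative unipotent $K$-groups of exponent $p$ and then collapse to $\G_a$ using a generic $K$-linear combination; the infinitude of $K$ would again play the decisive role.
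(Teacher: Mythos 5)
Your argument is correct in substance, and it is worth noting that the paper does not actually prove this lemma at all: it simply cites \cite[Proposition B.1.13]{CGP} and \cite{Oe}. What you have written is essentially the standard proof underlying those references: reduce to embedding the \'etale group $P$ into $\G_a$, i.e.\ to embedding the Galois module $M=P(K^{\mathrm{sep}})$ equivariantly into $K^{\mathrm{sep}}$; use the normal basis theorem to write $L\cong K[G]\cong \F_p[G]^{\oplus I}$ with $I$ infinite because $K$ is infinite (this is exactly where the hypothesis enters, since $K$ infinite is equivalent to $\dim_{\F_p}K=\infty$); and use that $\F_p[G]$ is quasi-Frobenius, so every finite module embeds into a free module. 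All of this is sound, including the duality step $M\hookrightarrow \F_p[G]^N$ obtained by dualizing a surjection onto $M^{\vee}$.

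The one step you should repair is the identification $\G_a/P\cong\G_a$. As literally written, the inference ``$\G_a/P$ is a $K$-form of $\G_a$, $\mathrm{Aut}(\G_a)=\G_m$, $H^1(K,\G_m)=0$, hence trivial'' is not valid: over a non-perfect field there exist nontrivial forms of $\G_a$ (Russell forms), precisely because the relevant automorphism sheaf in the flat topology is larger than $\G_m$; $H^1(K,\G_m)$ only classifies forms that are split over $K^{\mathrm{sep}}$. The conclusion is nevertheless correct here because $P$ is \'etale, so over $K^{\mathrm{sep}}$ the subgroup becomes the constant group on $M$ and $\G_a/M\cong\G_a$ via the additive polynomial $\prod_{m\in M}(T-m)$; thus $\G_a/P$ is a \emph{separably split} form and Hilbert~90 does apply. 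Better still, you can bypass the descent argument entirely: once the equivariant injection $M\hookrightarrow K^{\mathrm{sep}}$ is constructed, the polynomial $f_0(T)=\prod_{m\in M}(T-m)$ is additive, separable, has coefficients in $K$ by Galois invariance, and has kernel exactly $P$; dividing by its (nonzero) linear coefficient $\prod_{0\neq m\in M}(-m)$ gives the normalized $f$ of the statement, with $\deg f=|M|=p^m$ and $m\geq 1$ since $P$ is nontrivial.
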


\begin{proof} This is well known, see e.g.\ \cite[Proposition B.1.13]{CGP} or \ \cite[Chapitre V, Proposition 4.1 and Subsection 6.1]{Oe}.
\end{proof}

We are now ready to prove  Theorem~\ref{thm:main}.

\begin{proof}[Proof of Theorem~\ref{thm:main}]  
We have $\cd_p(\Gal(K))\leq 1$ (see, e.g., \cite[Chapter II, Proposition 3]{Se2}). By  Theorem~\ref{thm:Harbater} it suffices to prove that  $\Gal(K)$ is strongly $p$-dominating. 

Indeed, let $P$ be a non-trivial elementary $p$-group on which $\Gal(K)$ acts. 
Consider $P$ as a finite $K$-group. Then $P$ is commutative and annihilated by $p$. Hence by Lemma~\ref{lem:additivepol}, $P$ is $K$-isomorphic to a subgroup of $\G_a$ defined as the kernel of $f: \G_a\to \G_a$,  where $f(T)=T+\cdots+b_mT^{p^m}$ is a $p$-polynomial in one variable with coefficients in $K$ with $m\geq 1$ and  $b_m\not=0$. We have the following exact sequence of $K$-groups
$$
0\to P\to \G_a \stackrel{f}{\to}\G_a\to 0.  
$$
From this exact sequence we get the following exact sequence of Galois cohomology groups
$$
H^0(K,\G_a)\stackrel{f}{\to} H^0(K,\G_a)\to H^1(K,P)\to H^1(K,\G_a).
$$

By Hilbert 90 $H^1 (K,\G_a)=0$ (see e.g.\ \cite[Chapter II,  Proposition 1]{Se2}), hence
$$
H^1(K,P)\simeq H^0(K,\G_a)/\im(f)=K/f(K).
$$
The latter is infinite by Proposition~\ref{prop:infinite}. So we conclude that $H^1(K, P)$ is infinite, and hence $\Gal(K)$ is strongly $p$-dominating. 
\end{proof}

We recall that a Hilbertian field is a field $K$ having the irreducible specialization property: for every irreducible polynomial $f(T,X)\in k[T,X]$ that is separable in $X$, there exists $a\in K$ such that $f(a,X)$ is irreducible in $k[X]$ (we refer readers to \cite[Chapters 12, 13]{FJ}  for more details about Hilbertian fields). In \cite{DD}, D\`ebes and Deschamps give the following conjecture.
\begin{conj}[{\cite[2.1.2]{DD}}] 
\label{conj:DD}
Let $K$ be a Hilbertian field. Then every finite split embedding problem for $\Gal(K)$ has a proper solution.
\end{conj}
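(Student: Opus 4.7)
The plan is to apply the standard reduction-and-specialize strategy for embedding problems over Hilbertian fields, and then to isolate the genuinely open geometric core of the conjecture. First, one reduces to the case where the kernel $N = \ker(f)$ is a minimal normal subgroup of $\Gamma$: any finite split embedding problem $\sE$ admits a $\Gamma$-invariant filtration of $N$ with characteristically simple successive quotients, and a proper solution of $\sE$ can be assembled inductively from proper solutions of the intermediate split embedding problems (each of which inherits the split property from $\sE$). Thus one may assume $N$ is either an elementary abelian $\ell$-group or a direct power of a nonabelian finite simple group.

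The abelian case is essentially classical. If $N$ is an elementary abelian $\ell$-group, the group $H^1(K, N)$ admits an explicit description by additive polynomials when $\ell = \Char(K)$ (as in Lemma~\ref{lem:additivepol}) or via Kummer theory and characters when $\ell \neq \Char(K)$. Combined with a regular $\Gamma$-realization over $K(t)$ dominating the $G$-cover attached to $\alpha$, Hilbert's irreducibility theorem then yields a proper --- not merely weak --- solution of $\sE$. The full weight of Conjecture~\ref{conj:DD} therefore rests on the case where $N$ is a power of a nonabelian finite simple group.

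For that case the natural approach is to lift the problem to $K(t)$. Since $K(t)/K$ is a regular extension there is a natural surjection $\Gal(K(t)) \surj \Gal(K)$, and base change produces a split embedding problem $\tilde{\sE}$ for $\Gal(K(t))$ with the same kernel $N$. The aim would be to construct a proper solution of $\tilde{\sE}$ that is \emph{regular} --- that is, corresponds to a Galois cover of $\P^1_K$ with group $\Gamma$, defined over $K$, dominating the given $G$-cover associated to $\alpha$. Given such a regular solution, the Hilbertianity of $K$ supplies infinitely many rational specializations $t \mapsto a \in K$ preserving irreducibility, and under any one of them the function-field solution descends to a proper solution of $\sE$.

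The hard part, and the reason Conjecture~\ref{conj:DD} remains open, is the construction of the regular Galois cover in the nonabelian simple case. Over algebraically closed base fields, or more generally over large fields in the sense of Pop, this is accomplished by Riemann existence, formal patching, or rigid-analytic patching, and those techniques yield Conjecture~\ref{conj:DD} for large Hilbertian fields. Over an arbitrary Hilbertian $K$, however, even the subcase where $G$ is trivial is the regular inverse Galois problem for $K$, which for $K = \Q$ is wide open. Thus my plan produces a clean reduction of Conjecture~\ref{conj:DD} to the regular inverse Galois problem for split embedding problems over $K(t)$, and identifies the construction of nonabelian simple Galois covers of $\P^1_K$ without base change to a large field as the essential obstacle.
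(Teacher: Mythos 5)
The statement you were asked to prove is labelled as a \emph{conjecture} in the paper: it is Conjecture 2.1.2 of D\`ebes and Deschamps, quoted verbatim, and the paper offers no proof of it --- indeed it is open even for $K=\Q$ and trivial $G$, where it specializes to the regular inverse Galois problem. The paper only establishes the special case in which $K$ has characteristic $p>0$ and the kernel is a $p$-group (Corollary~\ref{cor:DD}), by solving the problem over $K(t)$ via Theorem~\ref{thm:main} and then specializing using Hilbertianity. Your proposal is honest on this point --- you explicitly identify the nonabelian characteristically simple kernel case as the open core --- but that means what you have written is a discussion of the conjecture's status, not a proof, and no comparison with a proof in the paper is possible because none exists there.

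Beyond that, your first reduction step contains a genuine error: splitness is \emph{not} inherited by the intermediate embedding problems obtained from a $\Gamma$-invariant filtration of the kernel. If $1\to N\to\Gamma\to G\to 1$ splits and $N_1\trianglelefteq\Gamma$ with $N_1<N$, then the first-stage problem $1\to N/N_1\to\Gamma/N_1\to G\to 1$ does split, but the second-stage problem $1\to N_1\to\Gamma\to\Gamma/N_1\to 1$ need not: take $\Gamma=\Z/4\Z\times G$, $N=\Z/4\Z$, $N_1=2\Z/4\Z$. This failure is one of the recognized obstructions to reducing the conjecture to minimal (characteristically simple) kernels; the standard repairs --- dominating a nonsplit problem by a split one via the fiber product with the image of a weak solution, as sketched in the paper's Remark after Corollary~\ref{cor:DD}, or wreath-product constructions --- require additional hypotheses such as $\cd_p(\Gal(K))\leq 1$ or projectivity, which are unavailable over a general Hilbertian field. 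The abelian-kernel case you dispatch is correct as stated only because Ikeda's theorem (cited in the paper as \cite[Proposition 16.4.5]{FJ}) handles split embedding problems with abelian kernel over Hilbertian fields directly, without any minimal-kernel reduction; it should not be presented as a consequence of that reduction.
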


An easy consequence of Theorem~\ref{thm:main} is a simple proof of \cite[Theorem 8.3]{MM} which asserts that Conjecture~\ref{conj:DD} holds true whenever $K$ is of characteristic $p>0$ and if the kernel of the embedding problem is a $p$-group. Namely, we have

\begin{cor} 
\label{cor:DD}
Let $K$ be a Hilbertian field of characteristic $p>0$. Then every finite $p$-embedding problem for $\Gal(K)$ is  properly solvable.
\end{cor}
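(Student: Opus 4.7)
The plan is to lift the embedding problem to the rational function field $K(t)$, where Theorem~\ref{thm:main} applies via the $t$-adic valuation (whose value group $\Z$ is non-$p$-divisible), solve it there, and then descend to $K$ using Hilbertianity.

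Concretely, let $\sE$ be a finite $p$-embedding problem for $\Gal(K)$ as in Section~\ref{sec:3}, and let $L/K$ be the Galois extension corresponding to $\ker(\alpha)$, so that $\Gal(L/K) \cong G$. The base change $L(t)/K(t)$ is still Galois with group $G$, producing a surjection $\alpha'\colon \Gal(K(t)) \surj G$ that factors through the natural restriction $\Gal(K(t)) \to \Gal(K)$ followed by $\alpha$. Pairing $\alpha'$ with $f$ yields a $p$-embedding problem $\sE'$ for $\Gal(K(t))$ with the same kernel $N=\ker(f)$. By Theorem~\ref{thm:main}, $\sE'$ admits a proper solution, corresponding to a Galois extension $M/K(t)$ with $\Gal(M/K(t)) = \Gamma$, containing $L(t)$, whose quotient map to $G = \Gal(L(t)/K(t))$ coincides with $f$.

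Write $M = K(t)(\theta)$ with $\theta$ integral over $K[t]$ and minimal polynomial $h(t,X) \in K[t,X]$. By the Hilbertianity of $K$, infinitely many specializations $t \mapsto a \in K$ preserve the Galois structure: $h(a,X)$ is irreducible in $K[X]$ and defines a Galois extension $M_a/K$ with $\Gal(M_a/K) \cong \Gamma$. Since $L(t)/K(t)$ is the constant extension of $L/K$, such a specialization automatically recovers $L \subseteq M_a$, and under the induced identifications the quotient $\Gal(M_a/K) \surj \Gal(L/K)$ coincides with $f$. The resulting surjection $\Gal(K) \surj \Gal(M_a/K) = \Gamma$ is then a proper solution of $\sE$.

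The main obstacle is the last step: one must verify that the Hilbertian specialization simultaneously preserves both the top Galois group $\Gamma$ and the intermediate field $L$ (corresponding to the subgroup $N \subseteq \Gamma$). Both conditions are standard consequences of Hilbert irreducibility applied to a well-chosen primitive element polynomial for $M$ over $K(t)$, as developed in \cite{FJ}.
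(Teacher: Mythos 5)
Your proposal is correct and follows essentially the same route as the paper: lift the embedding problem to $\Gal(K(t))$ via the restriction map, apply Theorem~\ref{thm:main} using a discrete ($t$-adic) valuation on $K(t)$, and descend by Hilbertian specialization, with the compatibility of the specialized solution with the original quotient $\alpha$ handled by the standard lemma \cite[Lemma 16.4.2]{FJ} that the paper also cites. The only difference is that you spell out (and correctly flag as the delicate point) the simultaneous preservation of $\Gamma$ and of the intermediate field $L$, which the paper delegates entirely to that reference.
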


\begin{proof}
Let $\sE= (\alpha: \Gal(K)\to A, f: B\to A)$ be a finite $p$-embedding problem for $\Gal(K)$. Consider the finite $p$-embedding problem $\sE_t:=(\alpha\circ pr_t:\Gal(K(t))\to A, f:B\to A )$ for $\Gal(K(t))$ obtained by composition with the restriction map $\Gal(K(t))\to \Gal(K)$. Since $K(t)$ has discrete valuations,  Theorem~\ref{thm:main} gives a proper solution of $\sE_t$, say $\theta_t\colon \Gal(K(t))\to B$. By the irreducible specialization property (applied to a polynomial a root of which generates the solution field of $\theta_t$ over $K(t)$) $\theta_t$ specializes to a proper solution $\theta$ of $\sE$  (see \cite[Lemma 16.4.2]{FJ}).
\end{proof}

\begin{rmk} 
\begin{enumerate}
\item Let $G$  be a finite $p$-group, $K$ a Hilbertian field of characteristic $p>0$. By considering the finite (split) $p$-embedding problem $(\Gal(K)\to \{1\},G\to \{1\})$, Corollary~\ref{cor:DD} implies that $G$ is realizable over $K$. In other words, this proposition shows that every finite $p$-group is realizable over an arbitrary Hilbertian field of characteristic $p>0$. This last statement is a special case of a theorem of Shafarevich, \cite[Theorem 16.4.7]{FJ}.

\item Corollary~\ref{cor:DD} can also be derived from Ikeda's theorem (\cite[Proposition 16.4.5]{FJ}).  Here we sketch the proof: one starts with a  finite embedding problem for $K$ corresponding to an exact squence $1\to P \to B \to A\to 1$, where $P$ is a $p$-group and $B = \Gal(L/K)$. We use the usual trick of decomposing this embedding problem to a series of embedding problems in order to assume that $P$ is a minimal normal subgroup of $B$. In particular $P$ is \emph{abelian}. Since $\cd_p(K) \leq 1$ we can replace this embedding problem by a bigger \emph{split} embedding problem with the same kernel by taking the fiber product of $B$ and the image of a weak solution. Now we use Ikeda's result that gives a \emph{regular} solution over $K$, i.e., a solution  over $K(t)$ with the extra condition that the solution field is regular  over $L$. Then one uses Hilbertianity to reduce the solution to a solution over $K$. 

Unfortunately, we do not know whether any finite $p$-embedding problem over a field of characteristic $p>0$ has a regular solution.

\item For recent results concerning Conjecture~\ref{conj:DD}, we refer readers to \cite{BP,Pa1, Pa2, Po}.
\end{enumerate}
\end{rmk}

\section{Embedding problems with $p$-kernel}
In this section we show that the converse of Theorem~\ref{thm:main} also holds true, see Theorem~\ref{thm:converse}. 
 
 Let 
\[ \sE:= 
\xymatrix
{
{} & {} & {} & \Pi \ar[d]^{\alpha}\\
1 \ar[r] & P \ar[r] &\Gamma \ar[r]^f & G\ar[r] & 1
}
\]
be an embedding problem for $\Pi$ with abelian kernel $P$. Since $P$ is abelian, there is an induced conjugation action of $G$ on P by choosing representatives in $\Gamma$. This in turn yields an action of $\Pi$ on $P$ via $\alpha:\Pi\to G$. Let $H^1(\Pi,P)$ be the corresponding Galois cohomology group. 

Two weak solutions $\beta$ and $\beta^\prime: \Pi\to \Gamma$ of $\sE$ are defined to be equivalent, and denoted by $\beta \sim \beta^\prime $, if there is an element $p$ in $P$ such that $\beta^\prime= {\rm inn}(p)\circ \beta$. (Here ${\rm inn}(p)\in {\rm Aut}(\Gamma)$ denotes left conjugation by $p$.) One can check that $\sim$ is an equivalence on the set of weak solutions to $\sE$. Denote by ${\rm WS}(\sE)$ the set of weak solutions of $\sE$ modulo the equivalence relation $\sim$.  We have a cohomological description of ${\rm WS}(\sE)$. 

\begin{lem}
\label{lem:WS}
With notation as above, assume that $\sE$ is weakly solvable. Then 
${\rm WS}(\sE)$ is a $H^1(\Pi,P)$-torsor. 
In particular, any weak solution $\theta$ of $\sE$ induces a bijection 
\[
{\rm WS}(\sE) \cong H^1(\Pi,P).
\]
\end{lem}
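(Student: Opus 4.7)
The plan is to fix an arbitrary weak solution $\theta\colon \Pi\to\Gamma$ (provided by the hypothesis that $\sE$ is weakly solvable) and to build mutually inverse maps between ${\rm WS}(\sE)$ and $H^1(\Pi,P)$. The $H^1(\Pi,P)$-torsor structure on ${\rm WS}(\sE)$ will then be obtained by transporting the additive structure of $H^1(\Pi,P)$ across this bijection, with the base point corresponding to the class of $\theta$.

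For the forward map, given any weak solution $\beta$, I set $c_\beta(\pi):=\beta(\pi)\theta(\pi)^{-1}$. Since $f\beta=f\theta=\alpha$, the element $c_\beta(\pi)$ lies in $\ker f=P$. Expanding
\[
\beta(\pi_1\pi_2)\theta(\pi_1\pi_2)^{-1} = \beta(\pi_1)\theta(\pi_1)^{-1}\cdot\theta(\pi_1)\bigl(\beta(\pi_2)\theta(\pi_2)^{-1}\bigr)\theta(\pi_1)^{-1}
\]
and using that conjugation by $\theta(\pi_1)$ on $P$ coincides with the $\Pi$-action (through $\alpha$), one obtains the cocycle identity $c_\beta(\pi_1\pi_2)=c_\beta(\pi_1)\cdot{}^{\pi_1}c_\beta(\pi_2)$. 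A parallel short calculation shows that $\beta'={\rm inn}(p)\circ\beta$ for some $p\in P$ yields $c_{\beta'}(\pi)=p\cdot c_\beta(\pi)\cdot{}^{\pi}p^{-1}$, so $[c_\beta]=[c_{\beta'}]$ in $H^1(\Pi,P)$. This defines a map $\Phi\colon {\rm WS}(\sE)\to H^1(\Pi,P)$.

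For the inverse, given a 1-cocycle $c$, I set $\beta_c(\pi):=c(\pi)\theta(\pi)$ and check using the cocycle identity that $\beta_c$ is a homomorphism, with $f\beta_c=\alpha$ by construction. Cohomologous cocycles yield equivalent weak solutions by reversing the coboundary computation above, and a direct inspection shows that $\Phi$ and $c\mapsto[\beta_c]$ are mutually inverse. The torsor action is then $[c]\cdot[\beta]:=[c\cdot\beta]$, where $(c\cdot\beta)(\pi):=c(\pi)\beta(\pi)$; freeness and transitivity are exactly injectivity and surjectivity of $\Phi$ after fixing $\theta$ as base point.

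The only real obstacle is the bookkeeping in the cocycle verifications: one must consistently pass between the $\Pi$-action on $P$ (defined via $\alpha$ and the conjugation action of $\Gamma$ on $P$) and genuine conjugation in $\Gamma$ by arbitrary lifts such as $\theta(\pi)$, $\beta(\pi)$, or products with elements of $P$. This is legitimate precisely because $P$ is abelian and normal in $\Gamma$, so any two lifts of a given element of $G$ induce the same automorphism of $P$. Once this observation is in place, all remaining steps are mechanical.
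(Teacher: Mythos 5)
Your proof is correct: the cocycle $c_\beta(\pi)=\beta(\pi)\theta(\pi)^{-1}$, the verification that equivalence of solutions corresponds to coboundaries (using that $P$ is abelian and normal so that all lifts of $\alpha(\pi)$ induce the same automorphism of $P$), and the inverse construction $\beta_c=c\cdot\theta$ all check out, and together they give exactly the torsor statement. The paper itself offers no argument here --- it simply cites \cite[Proposition 9.4.4]{NSW} --- so what you have written is the standard self-contained proof underlying that reference, not a genuinely different route.
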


\begin{proof} 
See \cite[Proposition 9.4.4]{NSW}.
\end{proof}

Next we prove the converse of Theorem~\ref{thm:Harbater}. For future reference we formulate it as an if and only if theorem. 

\begin{thm}
\label{thm:converse}
 Let $\Pi$ be  a  profinite group.  Then every finite $p$-embedding problem for $\Pi$ has a proper solution if and only if $\cd_p(\Pi)\leq 1$ and $\Pi$ is strongly $p$-dominating.
\end{thm}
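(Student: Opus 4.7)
The backward implication is Theorem~\ref{thm:Harbater} verbatim, so the plan is to prove the forward implication. Assume every finite $p$-embedding problem for $\Pi$ has a proper solution. Two things must be deduced: $\cd_p(\Pi)\leq 1$ and strong $p$-domination. The first is comparatively formal; the second is where the real work sits.

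For $\cd_p(\Pi)\leq 1$: proper solvability implies weak solvability, and a standard dévissage reduces the vanishing of $H^2(\Pi,A)$ on finite $p$-primary $\Pi$-modules $A$ to the case of finite elementary abelian $p$-groups with continuous $\Pi$-action. Given such $A$ and any $c\in H^2(\Pi,A)$, continuity factors both $c$ and the action on $A$ through a finite quotient $\alpha\colon\Pi\twoheadrightarrow G$, so $c$ is the inflation of some $c'\in H^2(G,A)$. The class $c'$ classifies a finite extension $1\to A\to\Gamma\to G\to 1$, and the resulting finite $p$-embedding problem $(\alpha,\Gamma\to G)$ is weakly solvable by hypothesis; a weak solution is equivalent to the pulled-back extension $\Pi\times_G\Gamma\to\Pi$ being split, i.e., to $c=\alpha^*c'=0$. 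Hence $H^2(\Pi,A)=0$ and $\cd_p(\Pi)\leq 1$.

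For strong $p$-domination, let $P$ be a nontrivial finite elementary abelian $p$-group on which $\Pi$ acts through a finite quotient $\alpha\colon\Pi\twoheadrightarrow G$, and set $M:=\ker\alpha$. Suppose for contradiction that $H^1(\Pi,P)$ has finite $\F_p$-dimension $k$. For each $n\geq 1$ I would form the split finite $p$-embedding problem $\sE_n:=(\alpha,f\colon P^n\rtimes G\to G)$ with diagonal $G$-action on $P^n$. Every homomorphism $\beta\colon\Pi\to P^n\rtimes G$ lifting $\alpha$ has the form $\beta(g)=(v(g),\alpha(g))$ with $v\in Z^1(\Pi,P^n)\cong Z^1(\Pi,P)^n$, and $\beta$ is proper if and only if $v|_M\colon M\to P^n$ is surjective, since $\beta(\Pi)\cap P^n=v(M)$. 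Coboundaries vanish on $M$, so $v\mapsto v|_M$ descends to $H^1(\Pi,P^n)\to\Hom(M,P^n)^G$, and its image is $T^n$, where $T\subseteq\Hom(M,P)^G$ is the image of the analogous restriction for $P$; in particular $\dim_{\F_p}T\leq k$. Choose an $\F_p$-basis $t_1^0,\ldots,t_k^0$ of $T$ and assemble it into $\Psi=(t_1^0,\ldots,t_k^0)\colon M\to P^k$. Any $(t_1,\ldots,t_n)\in T^n$ then factors as $C\circ\Psi$ for the $\F_p$-linear map $C\colon P^k\to P^n$ built from the coordinate matrix of the $t_i$ in this basis; hence the image of $v|_M$ has $\F_p$-dimension at most $k\cdot\dim_{\F_p}P$. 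For $n>k$ this is strictly less than $\dim_{\F_p}P^n$, ruling out surjectivity and so ruling out any proper solution of $\sE_n$, contradicting the hypothesis. Thus $H^1(\Pi,P)$ is infinite and $\Pi$ is strongly $p$-dominating.

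The main obstacle is finding the right family $\sE_n$: increasing the multiplicity $n$ of the kernel $P^n$ enlarges the target that the restriction $v|_M$ must cover, while a bound on $H^1(\Pi,P)$ uniformly caps the effective $\F_p$-rank of every achievable $v|_M$, and this mismatch is what kills properness. Once the family is chosen, the remaining linear-algebra bound via the common factorization through $\Psi$ is elementary, and the $\cd_p\leq 1$ half is a routine cohomological translation.
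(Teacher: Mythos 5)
Your proposal is correct, and it shares the paper's key construction while finishing differently. Both arguments hinge on the same family of test problems: the split embedding problem with kernel $P^n$ (your $P^n\rtimes G$ is exactly the paper's $n$-fold fiber product $\Gamma_G^n$ of $\Gamma=P\rtimes G$ over $G$). The paper runs the implication forward: from one proper solution of $\sE_n$ it extracts $n$ pairwise non-equivalent weak solutions of $\sE$ (non-equivalence is checked by hitting an explicit element $(1,\dots,q,\dots,q',\dots,1)$ using properness), and then invokes the fact that the set of weak solutions modulo $P$-conjugacy is a torsor under $H^1(\Pi,P)$ to conclude that $H^1(\Pi,P)$ is infinite. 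You instead argue contrapositively and entirely at the level of cocycles: a lift of $\alpha$ to $P^n\rtimes G$ is a cocycle $v\in Z^1(\Pi,P)^n$, properness is equivalent to surjectivity of $v|_{\ker\alpha}$, and a finite-dimensional $H^1(\Pi,P)$ caps the rank of every such restriction by $k\cdot\dim_{\F_p}P$, which is beaten by $\dim_{\F_p}P^n$ once $n>k$. Your route avoids the torsor lemma (Lemma~\ref{lem:WS}, cited from \cite{NSW}) at the cost of a small amount of linear algebra, so it is somewhat more self-contained. You also spell out the $\cd_p(\Pi)\leq 1$ half via the standard identification of the obstruction to weak solvability with an inflated class in $H^2$; the paper silently treats this as known and only proves strong $p$-domination. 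Two cosmetic points: your basis of $T$ should have size $\dim_{\F_p}T\leq k$ rather than exactly $k$ (harmless --- take a spanning set), and it is worth saying explicitly that $v|_M$ is a homomorphism because $M=\ker\alpha$ acts trivially on $P$, which is also why coboundaries die on $M$.
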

\begin{proof}
$(\Leftarrow)$: This is Theorem~\ref{thm:Harbater}. 

$(\Rightarrow)$: It suffices to prove that $\Pi$ is strongly $p$-dominating. Let $P$ be a nontrivial elementary abelian $p$-group on which $\Pi$ acts continuously. We have to show that $H^1(\Pi,P)$ is infinite.

Since the action of $\Pi$ on $P$ is continuous, it factors via a finite quotient. I.e., there is a  map $\alpha\colon \Pi\to G$ and an action of $G$ on $P$ that induces the action of $\Pi$ on $P$.  Let $\Gamma$ be the semidirect product of $P$ and $G$. We get the following split embedding problem with elementary abelian $p$-kernel
\[ \sE:= 
\xymatrix
{
{} & {} & {} & \Pi \ar[d]^{\alpha}\\
1 \ar[r] & P \ar[r] &\Gamma \ar[r]^f & G\ar[r] & 1
}
\]
For any $n>0$ let $\Gamma_G^n$ be the $n$-th fold fiber product of $\Gamma$ over $G$, i.e., 
$$\Gamma_G^n=\{(\gamma_1,\ldots, \gamma_n), \gamma_i\in \Gamma,\ \textup{and } f(\gamma_1)=\cdots=f(\gamma_n)\in G\}.$$ 
We have a map $f_n\colon \Gamma_G^n\to G$, defined by $f_n((\gamma_i)_{i=1}^n) = f(\gamma_1)$. 

We have an embedding problem $\sE_n$ for $\Pi$ corresponding to the exact sequence
$$\xymatrix@1{
1\ar[r]& P^n\ar[r]& \Gamma_G^n\ar[r]^{f_n}& G \ar[r]& 1.
} $$ 
By assumption, there is a proper solution $\beta$ to $\sE_n$. By composing $\beta$ with the projections ${\rm pr}_i: \Gamma_G^n \to \Gamma$, we get $n$ proper solutions $\beta_1,\ldots, \beta_n$. 

We show that these $\beta_i$ are pairwise non-equivalent (and in particular distinct). Indeed, if $\beta_i\sim \beta_j$, for some  $1\leq i<j\leq n$, then there is a element $p\in P$ such that $\beta_i(s)=p  \beta_j(s) p^{-1}$, for all $s \in \Pi$. Since $P$ is a non-trivial group, we can take two different elements $q,q^\prime$ from $P$. Set $x=(1,\ldots,q,\ldots,q^\prime,\ldots,1)\in  \Gamma^n$, where $q,q^{\prime}$ are in $i$-th and $j$-th entry, respectively and  $1$ is in all other entries. Then $x\in\Gamma_G^n$. Since $\beta$ is a \emph{proper} solution, there exists $s$ in $\Pi$ such that $\beta(s)=x$. We then have
$$q=\beta_i(s)=p\beta_j(s)p^{-1}=pq^\prime p^{-1}=q^\prime,$$
a contradiction.

Therefore, we get that ${\rm WS}(\sE)$ is infinite, and by  Lemma~\ref{lem:WS}, $H^1(\Pi,P)$ is infinite, as needed.  
\end{proof}


\begin{thebibliography}{999}
\bibitem[BHH]{BHH}
L.~Bary-Soroker, D.~Haran, and D.~Harbater, Permanence criteria for semi-free profinite groups, Math. Ann. (to appear).
\bibitem[BP]{BP} L.~Bary-Soroker and E.~Paran, Fully Hilbertian fields, preprint.
\bibitem[Bo]{Bo} A.~Borel, Linear Algebraic Groups (2nd ed.), Graduate texts in mathematics {\bf 126}, New York: Springer-Verlag 1991.
\bibitem[CGP]{CGP} B.~Conrad, O.~Gabber and G.~Prasad, Pseudo-reductive groups, Series: New Mathematical Monographs (No. 17) (to appear).
\bibitem[DD]{DD} 
P.~D{\`e}bes and B.~Deschamps, The regular inverse Galois problem over large fields, in {\it Geometric Galois Actions}, vol. 2, London Math. Soc. Lecture Note Ser. {\bf 243}, Cambridge Univ. Press, Cambridge, 1997, pp. 119-138.

\bibitem[FJ]{FJ} 
M. D.~Fried and M.~Jarden, Field arithmetic, third ed., Ergebnisse der Mathematik und ihrer Gren-
zgebiete. 3. Folge. A Series of Modern Surveys in Mathematics [Results in Mathematics and Related Areas. 3rd
Series. A Series of Modern Surveys in Mathematics], vol. 11, Springer-Verlag, Berlin, 2008, Revised by Jarden.

\bibitem[Ha1]{Ha1} D.~Harbater, Embedding problems with local conditions, Israel Journal of Mathematics, {\bf 118} (2000), 317-355.

\bibitem[Ha2]{Ha2} D.~Harbater, Correction and addendum to ``Embedding problems with local conditions'',   Israel J. Math.  {\bf 162}  (2007), 373--379.

\bibitem[Ha3]{Ha3} 
D.~Harbater, On function fields with free absolute galois groups, Journal f{\"u}r die reine und angewandte Mathematik (Crelles Journal), \textbf{2009}(632)(2009) 85--103	.

\bibitem[HS]{HS}
D.~Harbater and K.~F. Stevenson,
Local {G}alois theory in dimension two, Advances in Mathematics, {\bf 198}(2)(2005), 623--653.

\bibitem[Ja]{Ja}
M.~Jarden, Algebraic patching, Springer Monographs in Mathematics (to appear). 

\bibitem[Ka]{Ka} N.~Katz, Local-to-global extensions of representations of fundamental groups, Ann. Inst. Fourier, Grenoble {\bf 36} (1986), 69-106.

\bibitem[MM]{MM} G.~Malle and B. H.~Matzat, Inverse Galois theory, Springer Monographs in Mathematics, 1999.

\bibitem[NSW]{NSW} 
J.~Neukirch, A.~Schmidt and K.~Wingberg, Cohomology of Number Fields,  Grundlehren der mathematischen Wissenschaften Bd. 323, Springer 2000. 

\bibitem[Oe]{Oe} J. Oesterl{\'e}, Nombre  de Tamagawa et groupes unipotents en  caract{\'e}ristique {$p$}.  Invent. Math, {\bf 78} (1984), 13-88.

\bibitem[Pa1]{Pa1} E.~Paran, Split embedding problems over complete domains, Annals of Math., {\bf 170} (2009), 899-914.
\bibitem[Pa2]{Pa2} E.~Paran, Galois theory over complete local domains, Math. Annalen (to appear).
\bibitem[Po]{Po} F. Pop, Henselian implies large, Annals of Math. (to appear). 
\bibitem[Se1]{Se} J.-P. Serre, Construction de revetements {\'e}tales de la droite affine en caract{\'e}ristique {$p$}, C.R. Acad. Sci. Paris, S{\'e}r. I, Math., {\bf 311} (1990), 341-346.
\bibitem[Se2]{Se2} J.-P.~Serre, Galois cohomology, Corr. 2 printing; Springer 2002 (Springer Monographs in Mathematics).
\bibitem[Wa]{Wa} 
W. C.~Waterhouse, Introduction to affine group schemes, Graduate texts in mathematics {\bf 66}, New York: Springer-Verlag 1979.
\end{thebibliography}
\end{document}